\newtheorem{thm}{Theorem}
\newtheorem{defn}{Definition}
\numberwithin{defn}{section}
\numberwithin{thm}{section}
\numberwithin{Lemma}{section}
\numberwithin{Corollary}{section}
\numberwithin{Example}{section}
\numberwithin{subsection}{section}
\numberwithin{Remark}{section}
\numberwithin{equation}{section}
\numberwithin{ppn}{section}
\begin{document}
\title[ A New  Efficient Optimal Eighth-Order  Iterative Method. . .]
{A New Efficient Optimal Eighth-Order Iterative Method for Solving Nonlinear Equations}
\author{J. P. Jaiswal and Neha Choubey }
\date{}
\maketitle

                                                       %abstract

%\begin{abstract}
\textbf{Abstract.} 
We established a new eighth-order iterative method, consisting of three steps, for solving nonlinear equations. Per iteration the method requires four evaluations (three function evaluations and one evaluation of the first derivative). Convergence analysis shows that this method is eighth-order convergent which is also substantiated through the numerical works. 
%So that its efficiency index is 1.682, which is optimal according to Kung and Traub conjecture  and greater than or equal to the efficiency indices of the well known  methods.  
Computational results ascertain that our method is efficient and demonstrate almost better performance as compared to the other well known eighth-order methods.\\ 
%\end{abstract}

\textbf{Mathematics Subject Classification (2000).}
65H05, 65H10, 41A25.\\

\textbf{Keywords and Phrases.} Nonlinear equations, Order of convergence, Efficiency index, Simple root, Hermite polynomial.

																%Introduction
\section{Introduction}
%Multipoint iterative methods for solving nonlinear equations are of great practical importance since they overcome theoretical limits of one-point methods concerning the convergence order and computational efficiency. 
This paper concerns the numerical solution of non-linear equations of the general form $f(x)=0$. Such equations appear in real world situations frequently while there is no closed form solution for them. That is why the numerical solution of these types of equations draw much attention to itself day by day. One of the common problems encountered in science and engineering problems is that given a single variable function $f(x)$, find the values of $x$ for which $f(x)=0$. 
%The solutions (values of $x$) are known as the roots of the equation $f(x)=0$, or the zeros of the function $f(x)$.
 The root of such nonlinear equations may be real or complex. 
 %In general, an equation may have any number of (real) roots or no roots at all. 
 There are two general types of methods available to find the roots of algebraic and transcendental equations. First, direct methods, which are not always applicable to find the roots, and second, iterative methods based on the concept of successive approximations. In this case, the general procedure is to start with one or more initial approximation(s) to the root and attain a sequence of iterates, which in the limit converges to the true solution. Multipoint iterative methods for solving nonlinear equations are of great practical importance since they overcome theoretical limits of one-point methods concerning the convergence order and computational efficiency. Here, we focus on the simple root of nonlinear scalar equations by iterative method. Let the function $f: D\subseteq \Re \longrightarrow \Re$ be a sufficiently differentiable function and $\alpha \in D$ be a simple root of $f(x)=0$. The famous Newton's method \cite{Iliev} of order two which can be defined as $x_{n+1}=x_{n}-f(x_n)/f'(x_n)$ is one of the oldest and the most applicable method in the literature. Traub \cite{Traub} proposed the concept of efficiency index as a measure for comparing methods. This index is prescribed by $p^{1/n}$, where $p$ is the order of convergence and $n$ is the whole number of evaluations per iteration. Kung and Traub \cite{Kung} then presented a hypothesis on the optimality of roots by giving $2^{n-1}$ as the optimal order. This means that the Newton iteration by two evaluations per iterations is optimal with 1.414 as the efficiency index. By taking into account the optimality concept many authors have tried to build iterative methods of optimal higher order of convergence. Anyway, these schemes are divided into two main categories. First the derivative free and second, the methods in which (first, second, ...) derivative evaluation is used per cycle. In this paper we present a new eighth-order iterative method to find a simple root $\alpha$ of the nonlinear equation $f(x)=0$. We will compare our new method with well known existing eighth-order methods, namely proposed in \cite{Bi1}, \cite{Sharma}, \cite{Thukral}, \cite{Sargolzaei}, \cite{Kim}, \cite{Soleymani1}, \cite{Soleymani2}, \cite{Cordero2},  \cite{Wang}, .
%Here we present a new method which agree with the Kung and Tarub conjecture for $n=4$. In addition, the efficiency indices of different methods are shown in the table I. It is observed that this new eighth-order method has equal or better efficiency index.
%an equivalent efficiency index to the established eighth-order derivative based methods presented in the literature \cite{Bi,Sharma}. Furthermore, the new eight-order iterative method has a better efficiency index than the sixth-order derivative-free methods presented in \cite{Cordero,Khattvi} and in view of this fact, the new method is significantly better when compared with the established methods. Consequently, we have found that the new eighth-order method is consistent, stable and convergent.

This paper is organized as follows: In section 2, we describe the eighth-order method and prove that the method obtained preserves their convergence order. This new method agree with the Kung and Traub conjecture for $n=4$. In section 3, we will briefly state the well known established methods in order to compare the effectiveness of the new method. Finally, in section 4, 
%first the comparison of the efficiency indices of different methods are shown in the table I. The
the new method is compared in the performance with some well known eighth-order methods. Numerical results indicate that the our proposed method give better performance. Consequently, we have found that the new eighth-order method is consistent, stable and convergent.

                                         %Development of the eight-order method and analysis of convergence

\section{Development of the method and analysis of convergence}
In this section, we will define a new eighth-order method. 
In order to establish the order of convergence of this new method, we state following  definitions:\\\\

\begin{defn}
Let f(x) be a real function with a simple root $\alpha$ and let ${x_n}$ be a sequence of real numbers that converge towards $\alpha$. The order of convergence m is given by
\begin{equation}\label{eqn:21}
\lim_{n\rightarrow\infty}\frac{x_{n+1}-\alpha}{(x_n-\alpha)^m}=\zeta\neq0,  
\end{equation}     
\noindent
where $\zeta$ is the asymptotic error constant and $m \in R^+$.\\
\end{defn}

%\begin{defn}
%Suppose that $x_{n-1}$, $x_n$ and $x_{n+1}$ are three successive iteration closer to the root $\alpha$ of $f(x)=0$. Then, the computational order of convergence \cite{Weerakoon} may be approximated by
%\begin{equation}\label{eqn:22}
%COC\approx\frac{ln\vert(x_{n+1}-\alpha)(x_n-\alpha)^{-1}\vert}{ln\vert(x_{n}-\alpha)(x_{n-1}-\alpha)^{-1}\vert},
%\end{equation}
%where $n \in N$.\\
%\end{defn}

\begin{defn}
Let $\beta$ be the number of function evaluations of the new method. The efficiency of the new method is measured by the concept  of efficiency index \cite{Gautschi,Traub1} and defined as
\begin{equation}\label{eqn:23}
\mu^{1/\beta},
\end{equation}
where $\mu$ is the order of the method.\\
\end{defn}

\subsection{ New Eighth-order Method}
In this section, we construct our eight-order method by considering three step cycles. Kung \cite{Richard} developed a one-parameter family of fourth-order methods, which is written as
\begin{equation}\label{eqn:24}
y_n=x_n-\frac{f(x_n)}{f'(x_n)},
\end{equation}
\begin{equation}\label{eqn:25}
z_n=y_n-\frac{f(x_n)+\beta f(y_n)}{f(x_n)+(\beta-2)f(y_n)}\left(\frac{f(y_n)}{f'(x_n)}\right),
\end{equation} 
where $\beta$ is a constant. In particular, the special method for $\beta=-1/2$ is as follows:
%To achieve an $8^{th}$ order method, we use the optimal efficient fourth-order method of \cite{Iliev} in the first and second steps of the three step cycle, and in the third step we apply the Newtons iteration as follows:
\begin{equation}\label{eqn:26}
y_n=x_n-\frac{f(x_n)}{f'(x_n)},
\end{equation}
\begin{equation}\label{eqn:27}
z_n=y_n-\frac{2f(x_n)-f(y_n)}{2f(x_n)-5f(y_n)}\left(\frac{f(y_n)}{f'(x_n)}\right).
\end{equation} 
To achieve an $8^{th}$ order method, we use the first and second steps of the three step cycle from the above method and in the third step we apply the Newtons iteration 
\begin{equation}\label{eqn:28}
x_{n+1}=z_n-\frac{f(z_n)}{f'(z_n)}.
\end{equation}
As we can see, this method consists of three evaluation of the function and two evaluation of the first derivative per iteration, which has efficiency index 1.5157. To derive a scheme with a higher efficiency index, we approximate $f'(z_n)$ using a Hermite interpolation polynomial which is equal to the function $f(x)$ in the domain $D$ ($D$ is the interval in which $f$ has a simple root).

To approximate $f'(z_n)$, we construct a Hermite interpolation polynomial, $H(x)$, that meets the interpolation conditions
\begin{equation*}
H(x_n)=f(x_n),\  H(y_n)=f(y_n), \  H(z_n)=f(z_n)\  and \ H'(x_n)=f'(x_n). 
\end{equation*}
The $H(x)$ could be defined as follows:
\begin{equation}\label{eqn:29}
H(x)=\omega_0(x)f(x_n)+\omega_1(x)f(y_n)+\omega_2(x)f(z_n)+\overline{\omega}_0(x)f'(x_n), 
\end{equation}
where the interpolation basis functions $\omega_0(x)$, $\omega_1(x)$, $\omega_2(x)$ and $\overline{\omega}_0(x)$ are cubic polynomial function that satisfy in the following conditions 
\begin{eqnarray}\label{eqn:210}
(i)\ \omega_0(x_n)=1, \omega_0(y_n)=0, \omega_0(z_n)=0, \omega_0'(x_n)=0, \nonumber \\
(ii)\ \omega_1(x_n)=0, \omega_1(y_n)=1, \omega_1(z_n)=0, \omega_1'(x_n)=0,  \nonumber \\
(iii)\ \omega_2(x_n)=0, \omega_2(y_n)=0, \omega_2(z_n)=1, \omega_2'(x_n)=0,   \nonumber \\
(iv)\ \overline{\omega}_0(x_n)=1, \overline{\omega}_0(y_n)=0, \overline{\omega}_0(z_n)=0, \overline{\omega}_0'(x_n)=1. 
\end{eqnarray}
From $(i)$, we have $\omega_0(x)=A(x-y_n)(x-z_n)(x-B)$, where $A$, $B$ are constants. The conditions $\omega_0(x_n)=1$ and
$\omega_0'(x_n)=0$ imply
\begin{equation*}
A=-\frac{2x_n-z_n-y_n}{(x_n-y_n)^2(x_n-z_n)^2},\ \  B=x_n+\frac{(x_n-y_n)(x_n-z_n)}{2x_n-z_n-y_n}. 
\end{equation*}
Thus 
\begin{equation}\label{eqn:211}
\omega_0(x)=\frac{(x-x_n)(x-z_n)}{(x_n-y_n)(x_n-z_n)}\left[1-\frac{(x-x_n)(2x_n-z_n-y_n)}{(x_n-y_n)(x_n-z_n)}\right]. 
\end{equation}
From $(ii)$, we have $\omega_1(x)=C(x-x_n)^2(x-z_n)$ where $C$ is a constant. The condition $\omega_1(y_n)=1$
implies 
\begin{equation*}
C=\frac{1}{(y_n-x_n)^2(y_n-z_n)}.
\end{equation*}
Thus 
\begin{equation}\label{eqn:212}
\omega_1(x)=\frac{(x-x_n)^2(x-z_n)}{(y_n-x_n)^2(y_n-z_n)}. 
\end{equation}
Similarly,
\begin{equation}\label{eqn:213}
\omega_2(x)=\frac{(x-x_n)^2(x-y_n)}{(z_n-x_n)^2(z_n-y_n)}. 
\end{equation}
From $(iv)$, we have $\overline{\omega}(x)=D(x-x_n)(x-y_n)(x-z_n)$ where $D$ a constant. The conditions $\overline{\omega}_0'(x_n)=1$  gives
\begin{equation*}
D=\frac{1}{(x_n-y_n)(x_n-z_n)}.
\end{equation*}
Hence
\begin{equation}\label{eqn:214}
\overline{\omega}_0(x)=\frac{(x-x_n)(x-y_n)(x-z_n)}{(x_n-y_n)(x_n-z_n)}. 
\end{equation}
With the specific expressions of the interpolation basis functions $\omega_0(x)$,  $\omega_1(x)$ $\omega_2(x)$, $\overline{\omega}_0(x)$, we have
\begin{equation}\label{eqn:215}
\begin{split}
H(x) &=\frac{(x-y_n)(x-z_n)}{(x_n-y_n)(x_n-z_n)}\left[1-\frac{(x-x_n)(2x_n-y_n-z_n)}{(x_n-y_n)(x_n-z_n)}\right]f(x_n)\\
&+\frac{(x-x_n)^2(x-z_n)}{(y_n-x_n)(y_n-z_n)}f(y_n)+\frac{(x-x_n)^2(x-y_n)}{(z_n-x_n)^2(z_n-y_n)}f(z_n) \\
&+\frac{(x-x_n)(x-y_n)(x-z_n)}{(x_n-y_n)(x_n-z_n)}f'(x_n),
\end{split}
\end{equation}
Differentiating the above equation then putting $x=z_n$ and Simplifying we can get
\begin{equation}\label{eqn:216a}
\begin{split}
H'(z_n)&=2\frac{f(x_n)-f(z_n)}{x_n-z_n}+\frac{f(y_n)-f(z_n)}{y_n-z_n}-\frac{f(x_n)-f(y_n)}{x_n-y_n}\\
       &+\frac{(x_n-z_n)}{(y_n-x_n)}\left\{\frac{f(y_n)-f(x_n)}{(y_n-x_n)}\right\}-\frac{(x_n-z_n)}{(y_n-x_n)}f'(x_n)-f'(x_n), 
\end{split}
\end{equation}
and subsequently, we can find
\begin{equation}\label{eqn:217a}
H'(z_n)=2f[x_n,z_n]+f[y_n,z_n]-f[x_n,y_n]+(x_n-z_n)f[y_n,x_n,x_n]-f'(x),
\end{equation}

where $f[x_n,z_n]$ (similarly $f[y_n,z_n]$ and $f[x_n,y_n]$) and $f[y_n,x_n,x_n]$ are defined below by taking into consideration the divided differences:
\begin{equation*}
f[x_n,z_n]=\frac{f(x_n)-f(z_n)}{x_n-z_n},
\end{equation*}  
\begin{equation*}
f[y_n,x_n,x_n]=\frac{f[y_n,x_n]-f'(x_n)}{y_n-x_n}.
\end{equation*}  
Now replacing $f'(z_n)$ in the equation $(\ref{eqn:28})$ by $H'(z_n)$ from the equation $(\ref{eqn:217a})$, we find that
%Hence, our eight-order method can be defined in the following way which includes four evaluations per iteration only
%\begin{equation}\label{eqn:216}
%y_n=x_n-\frac{f(x_n)}{f'(x_n)}
%\end{equation}
%\begin{equation}\label{eqn:217}
%z_n=y_n-\frac{2f(x_n)-f(y_n)}{2f(x_n)-5f(y_n)}\left(\frac{f(y_n)}{f'(x_n)}\right)
%\end{equation}
\begin{equation}\label{eqn:218a}
x_{n+1}=2f[x_n,z_n]+f[y_n,z_n]-f[x_n,y_n]+(x_n-z_n)f[y_n,x_n,x_n]-f'(x),
\end{equation}
Now this method consists of three evaluation of the functions and one evaluation of the first derivative per iteration, now it has improved efficiency index 1.6817. Now we prove the convergence of this method by the following theorem:

                                                             %theorem
\begin{thm}
 Let us consider $\alpha$ as the simple root of the nonlinear equation f(x)=0 in the domain D and assume that f(x) is sufficiently smooth in the neighborhood of the root. Then, the iterative scheme defined by $(\ref{eqn:26})$, $(\ref{eqn:27})$ and $(\ref{eqn:218a})$ is of local order eight and has the following error equation 
\begin{equation*}\label{eqn:215}
e_{n+1}=(c_2^3c_3^2-c_2^3c_3c_4)e_n^8+O(e_n^9),
\end{equation*}
where $e_n=x_n-\alpha$ and $c_h=\frac{f^{(h)}(d)}{h!}$, h=1,2,3.... \\
\end{thm}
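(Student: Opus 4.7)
The plan is a standard Taylor-expansion argument at the simple root $\alpha$, organised around the following observation: the Hermite approximation $H'(z_n)$ agrees with $f'(z_n)$ to sufficiently high order in $e_n$, so the final step behaves like a Newton iteration applied to the fourth-order iterate $z_n$ and therefore doubles the order to eight.

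First, I set $e_n = x_n - \alpha$ and write
\begin{equation*}
f(x_n) = f'(\alpha)\bigl(e_n + c_2 e_n^2 + c_3 e_n^3 + c_4 e_n^4 + \cdots\bigr), \qquad f'(x_n) = f'(\alpha)\bigl(1 + 2c_2 e_n + 3c_3 e_n^2 + \cdots\bigr).
\end{equation*}
Dividing gives the classical Newton error $y_n - \alpha = c_2 e_n^2 + 2(c_3 - c_2^2)e_n^3 + O(e_n^4)$. Taylor-expanding $f(y_n)$ about $\alpha$ and substituting into the second step (\ref{eqn:27}), the denominator $2f(x_n)-5f(y_n)$ produces the cancellation characteristic of the Kung--Traub $\beta = -1/2$ family, yielding an expansion of the form $z_n - \alpha = c_2(c_2^2 - c_3)e_n^4 + O(e_n^5)$.

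Next, I would treat the third step as a perturbed Newton iteration at $z_n$, writing $x_{n+1} - \alpha = (z_n - \alpha) - f(z_n)/H'(z_n)$. Since $f(z_n) = f'(\alpha)(z_n - \alpha) + O\!\bigl((z_n - \alpha)^2\bigr) = O(e_n^4)$, it suffices to estimate $H'(z_n) - f'(z_n)$. Because $H$ is the Hermite interpolant of $f$ with nodes $x_n$ (double), $y_n$, $z_n$, the standard interpolation-error formula gives
\begin{equation*}
f(t) - H(t) = \frac{f^{(4)}(\xi_t)}{4!}\,(t-x_n)^2(t-y_n)(t-z_n),
\end{equation*}
so differentiation at $t = z_n$ yields $f'(z_n) - H'(z_n) = \tfrac{1}{24} f^{(4)}(\xi)(z_n - x_n)^2(z_n - y_n) + \cdots$, which is $O(e_n^2)\cdot O(e_n^2) = O(e_n^4)$. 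Combined with $f(z_n) = O(e_n^4)$, this shows $f(z_n)/H'(z_n) = f(z_n)/f'(z_n) + O(e_n^8)$, so the scheme differs from a genuine Newton step on $z_n$ only at order $e_n^8$. Since a true Newton step on $z_n$ gives $x_{n+1} - \alpha \sim c_2(z_n - \alpha)^2 = O(e_n^8)$, the method is of order eight.

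Finally, to pin down the asymptotic constant $c_2^3 c_3^2 - c_2^3 c_3 c_4$ one must perform the full expansion explicitly: expand each divided difference in (\ref{eqn:217a}) up to $e_n^8$, substitute the expansions for $y_n - \alpha$ and $z_n - \alpha$, form $z_n - f(z_n)/H'(z_n)$, and collect the coefficient of $e_n^8$, taking care of the interaction between the $O(e_n^4)$ error in $H'(z_n)$ and the $O(e_n^4)$ factor $f(z_n)$. The main obstacle is nothing conceptual but the combinatorial bookkeeping in this symbolic expansion, which in practice is done with Maple or Mathematica; the nontrivial check is that the Hermite-derivative error and the leading coefficient of $z_n - \alpha$ combine to yield precisely the stated constant and do not degrade the order of convergence.
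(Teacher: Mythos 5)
Your route is genuinely different from the paper's. The paper proves the theorem by brute force: it Taylor-expands $f(x_n)$, $f'(x_n)$, $y_n$, $f(y_n)$, $z_n$, $f(z_n)$ and each of the divided differences $f[x_n,z_n]$, $f[y_n,z_n]$, $f[x_n,y_n]$, $f[y_n,x_n,x_n]$ appearing in $(\ref{eqn:218a})$, substitutes everything, and reads off the coefficient of $e_n^8$; no structural use is made of the fact that $H$ is a Hermite interpolant. You instead exploit exactly that structure: since $z_n$ is an interpolation node, differentiating the error formula $f(t)-H(t)=\frac{1}{4!}f^{(4)}(\xi_t)(t-x_n)^2(t-y_n)(t-z_n)$ at $t=z_n$ kills the term containing the nodal polynomial itself and leaves $f'(z_n)-H'(z_n)=\frac{1}{4!}f^{(4)}(\xi)(z_n-x_n)^2(z_n-y_n)=O(e_n^4)$, so the decomposition $x_{n+1}-\alpha=\bigl[(z_n-\alpha)-f(z_n)/f'(z_n)\bigr]+f(z_n)\bigl(H'(z_n)-f'(z_n)\bigr)/\bigl(f'(z_n)H'(z_n)\bigr)$ exhibits both pieces as $O(e_n^8)$. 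This is cleaner and more informative than the paper's computation, and it in fact hands you the asymptotic constant almost for free (the Newton piece contributes $c_2(z_n-\alpha)^2$, the interpolation piece contributes $(z_n-\alpha)\cdot c_2c_4e_n^4$ to leading order), so the full symbolic expansion you defer to a computer algebra system at the end is largely unnecessary.

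There is, however, one concrete error that would corrupt the constant if you carried the computation through: you assert $z_n-\alpha=c_2(c_2^2-c_3)e_n^4+O(e_n^5)$. That is the error constant of Ostrowski's method, i.e.\ of the Kung family with $\beta=0$. The method actually used here is the $\beta=-1/2$ member $(\ref{eqn:27})$, for which the $c_2^3$ term cancels and $z_n-\alpha=-c_2c_3e_n^4+O(e_n^5)$, as in the paper's $(\ref{eqn:221})$. The order-eight conclusion is unaffected, since only $z_n-\alpha=O(e_n^4)$ enters that part of your argument, but the stated error equation is not: with the correct fourth-order coefficient your two pieces give $c_2(-c_2c_3)^2e_n^8=c_2^3c_3^2e_n^8$ and $(-c_2c_3e_n^4)(c_2c_4e_n^4)=-c_2^2c_3c_4e_n^8$, i.e.\ $e_{n+1}=(c_2^3c_3^2-c_2^2c_3c_4)e_n^8+O(e_n^9)$, which agrees with the paper's expansion $(\ref{eqn:227})$ (the exponent of $c_2$ in the second term of the theorem's displayed error equation appears to be a typo in the paper); with your coefficient for $z_n-\alpha$ you would obtain a different and incorrect constant. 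Fix the second-step expansion and your argument is complete and, to my mind, preferable to the paper's.
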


\begin{proof}
 We provide the Taylor series expansion of each term involved in $(\ref{eqn:26})$, $(\ref{eqn:27})$ and $(\ref{eqn:218a})$. By Taylor expansion around the simple root in the $n^{th}$ iteration, we have\\
\begin{equation}\label{eqn:216}
\begin{split}
f(x_n) &=f'(\alpha)[e_n+c_2e_n^2+c_3e_n^3+c_4e_n^4+c_5^5e_n^5\\
            &+c_6e_n^6+c_7e_n^7+c_8e_n^8+c_9e_n^9+c_{10}e_n^{10}\\
            &+c_{11}e_n^{11}+c_{12}e_n^{12}+O(e_n^{13})]
\end{split}
\end{equation}
and, we have
\begin{equation}\label{eqn:217}
\begin{split}
f'(x_n) &=f'(\alpha)[1+2c_2e_n+3c_3e_n^2+4c_4e_n^3+5c_5^5e_n^4\\
            &+6c_6e_n^5+7c_7e_n^6+8c_8e_n^7+9c_9e_n^8+10c_{10}e_n^{9}\\
            &+11c_{11}e_n^{10}+12c_{12}e_n^{11}+O(e_n^{12})].
\end{split}
\end{equation}
Further more it can be easily find 
\begin{equation}\label{eqn:218}
\frac{f(x_n}{f'(x_n)}=e_n-c_2e_n^2+2c_2e_n^3-2c_3e_n^3+........+O(e_n^{12}).
\end{equation}
By considering this relation and equation $(\ref{eqn:26})$, we obtain
\begin{equation}\label{eqn:219}
y_n=\alpha+c_2e_n^2+2(c_3-c_2^2)e_n^3+......+O(e_n^{12}).
\end{equation}
At this time, we should expand $f(y_n)$ around the root by taking into consideration $(\ref{eqn:219})$. Accordingly, we have
\begin{equation}\label{eqn:220}
f(y_n)=f'(\alpha)[c_2e_n^2+2(-c_2^2+c_3)e_n^3+......+O(e^{12})].
\end{equation}
Using $(\ref{eqn:219})$, $(\ref{eqn:216})$, $(\ref{eqn:220})$ and $(\ref{eqn:218})$ in the equation $(\ref{eqn:27})$, we can obtain
\begin{equation}\label{eqn:221}
z_n=\alpha-c_2c_3e_2^4+(\frac{3}{2}c_2^4+2c_2^2c_3-2c_4c_2-2c_3^2)e_n^5+....+O(e_n^{12}).
\end{equation}
On the other hand, we have
\begin{equation}\label{eqn:222}
f(z_n)=f'(\alpha)[-c_2c_3e_n^4+(\frac{3}{2}c_2^2c_3-2c_4c_2-2c_3^2)e_n^5+......+O(e_n^{12})].
\end{equation} 
Now we expand the Taylor series of each existing divided differences in the denominator of equation $(\ref{eqn:218a})$. We obtain
\begin{equation}\label{eqn:223}
f[x_n,z_n]=f'(\alpha)[1+c_2e_n+c_3e_n^2+c_4e_n^3+.....+O(e_n^{12})],
\end{equation}
\begin{equation}\label{eqn:224}
f[y_n,z_n]=f'(\alpha)[1+c_2e_n^2+(2c_2c_3-2c_2^3)e_n^3+.....+O(e_n^{12})],
\end{equation}
\begin{equation}\label{eqn:225}
f[x_n,y_n]=f'(\alpha)[1+c_2e_n+(c_2^2+c_3)e_n^2+(-2c_2^3+3c_3c_2+c_4)e_n^3+.....+O(e_n^{12})],
\end{equation}
\begin{equation}\label{eqn:226}
\begin{split}
f[y_n, x_n, x_n]&=f'(\alpha)[c_2+2c_3e_n+(3c_4+c_2c_3)e_n^2\\
                &+(-2c_2^2c_3+2c_4c_2+2c_3^2+4c_5)e_n^3+....+O(e_n^{12})].
\end{split}
\end{equation}
By consider the above mentioned relations $(\ref{eqn:223})$-$(\ref{eqn:226})$ in the equation $(\ref{eqn:218a})$, we can find
\begin{equation}\label{eqn:227}
\begin{split}
x_{n+1} &=(c_2^3c_3^2-c_2^2c_3c_4)e^8+[-3c_2^6c_3+\frac{3}{2}c_2^5c_4-4c_2^4c_3^2+8c_2^3c_3c_4\\
            &+4c_2^2c_3^3-2c_5c_2^2c_3-2c_2^2c_4^2-4c_2c_3^2c_4]e^9+...+O(e_n^{12}).\\
\end{split}
\end{equation}
\end{proof}
This implies that the order of convergence for this method is eight i.e. optimal order of convergence and its efficiency index is $8^{1/4}\approx1.6817$, which is more that 1.4142 of Newton's method, and 1.5650 of three-step methods \cite{Chun,Cordero1}, and is equal to 1.6817 of \cite{Bi1}, \cite{Sharma}, \cite{Thukral}, \cite{Sargolzaei}, \cite{Kim}, \cite{Soleymani1}, \cite{Soleymani2}, \cite{Cordero2},  \cite{Wang} but numerical performance is  better to almost all the eighth-order methods ( later shown in the Tables 2-8).

\section{Well established eighth-order Methods}
First we are giving here some well established eighth-order methods:

\subsection{ Bi et al. Methods, [ \cite{Bi1}, 2009]:\\}
\textit{Method I}
\begin{equation*}\label{eqn:31}
y_n=x_n-\frac{f(x_n)}{f'(x_n)},
\end{equation*}
\begin{equation*}\label{eqn:32}
z_n=y_n-\left[\frac{2f(x_n)-f(y_n)}{2f(x_n)-5f(y_n)}\right]\left(\frac{f(y_n)}{f'(x_n)}\right),
\end{equation*}
\begin{equation}\label{eqn:33}
x_{n+1}=z_n-\left[\frac{f(x_n)+(\gamma+2)f(z_n)}{f(x_n)+\gamma f(z_n)}\right]\left(\frac{f(z_n)}{f[z_n,y_n]+f[z_n,x_n,x_n](z_n-y_n)}\right).\\
\end{equation}

\textit{Method II}
\begin{equation*}\label{eqn:34}
z_n=y_n-\left[1+\frac{2f(y_n)}{2f(x_n)}+5\left(\frac{f(y_n)}{f(x_n)}\right)^2+\left(\frac{f(y_n)}{f(x_n)}\right)^3\right]\left(\frac{f(y_n)}{f'(x_n)}\right),
\end{equation*}
\begin{equation}\label{eqn:35}
x_{n+1}=z_n-\left[\frac{f(x_n)+(\gamma+2)f(z_n)}{f(x_n)+\gamma f(z_n)}\right]\left(\frac{f(z_n)}{f[z_n,y_n]+f[z_n,x_n,x_n](z_n-y_n)}\right).\\
\end{equation}

\textit{Method II}
\begin{equation*}\label{eqn:36}
z_n=y_n-\left[1-\frac{2f(y_n)}{2f(x_n)}-\left(\frac{f(y_n)}{f(x_n)}\right)^2+\left(\frac{f(y_n)}{f(x_n)}\right)^3\right]^{-1}\left(\frac{f(y_n)}{f'(x_n)}\right),
\end{equation*}
\begin{equation}\label{eqn:37}
x_{n+1}=z_n-\left[\frac{f(x_n)+(\gamma+2)f(z_n)}{f(x_n)+\gamma f(z_n)}\right]\left(\frac{f(z_n)}{f[z_n,y_n]+f[z_n,x_n,x_n](z_n-y_n)}\right).\\
\end{equation}

\textit{Method IV}
\begin{equation*}\label{eqn:38}
z_n=y_n-\left[\frac{2f(x_n)-3f(y_n)}{f(x_n)}\right]^{-2/3}\left(\frac{f(y_n)}{f'(x_n)}\right),
\end{equation*}
\begin{equation}\label{eqn:39}
x_{n+1}=z_n-\left[\frac{f(x_n)+(\gamma+2)f(z_n)}{f(x_n)+\gamma f(z_n)}\right]\left(\frac{f(z_n)}{f[z_n,y_n]+f[z_n,x_n,x_n](z_n-y_n)}\right),
\end{equation}
where $\gamma \in R$ and denominator is not equal to zero.\\

\subsection{ Sharma et al. Methods, [ \cite{Sharma}, 2010]:\\}

\textit{Method I}
\begin{equation*}\label{eqn:310}
z_n=y_n-\left[\frac{f(x_n)}{f(x_n)-2f(y_n)}\right]\left(\frac{f(y_n)}{f'(x_n)}\right),
\end{equation*}
\begin{equation}\label{eqn:311}
x_{n+1}=z_n-\left[1+\frac{f(z_n)}{f(x_n)}+\gamma \left(\frac{f(z_n)}{f(x_n)}\right)^2\right]\left(\frac{f[x_n,y_n]f(z_n)}{f[y_n,z_n]f[x_n,z_n]}\right).\\
\end{equation}

\textit{Method II}
\begin{equation*}\label{eqn:312}
z_n=y_n-\left[\frac{f(x_n)}{f(x_n)-2f(y_n)}\right]\left(\frac{f(y_n)}{f'(x_n)}\right),
\end{equation*}
\begin{equation}\label{eqn:313}
x_{n+1}=z_n-\left[\frac{f(x_n)+(\gamma+1)f(z_n)}{f(x_n)+\gamma f(z_n)}\right]\left(\frac{f[x_n,y_n]f(z_n)}{f[y_n,z_n]f[x_n,z_n]}\right).\\
\end{equation} 

\textit{Method III}
\begin{equation*}\label{eqn:314}
z_n=y_n-\left[\frac{f(x_n)}{f(x_n)-2f(y_n)}\right]\left(\frac{f(y_n)}{f'(x_n)}\right),
\end{equation*}
\begin{equation}\label{eqn:315}
x_{n+1}=z_n-\left[1+\gamma\frac{f(z_n)}{f(x_n)}\right]^{(1/\gamma)}\left(\frac{f[x_n,y_n]f(z_n)}{f[y_n,z_n]f[x_n,z_n]}\right),
\end{equation}
where $\gamma \in R$ and denominator is not equal to zero.\\

\subsection{Thukral Method, [ \cite{Thukral}, 2010]:}
%\begin{equation}\label{eqn:316}
%y_n=x_n-\frac{f(x_n)}{f'(x_n)},
%\end{equation}
%\textit{Method I}
\begin{equation*}\label{eqn:317}
z_n=x_n-\frac{f(x_n)^2+f(y_n)^2}{f'(x_n)(f(x_n)-f(y_n))},
\end{equation*}
\begin{equation}\label{eqn:318}
x_{n+1}=z_n-\left[\left(\frac{1+\mu_n^2}{1-\mu_n}\right)^2-2(\mu_n)^2-6(\mu_n)^3+\frac{f(z_n)}{f(y_n)}+
4 \frac{f(z_n)}{f(x_n)} \right] \left(\frac{f(z_n)}{f'(x_n)}\right).
\end{equation}
where $\mu_n=(f(y_n)/f(x_n))$ and denominator is not equal to zero.\\

\subsection{ Wang et al., [ \cite{Wang}, 2010]:} 
%The following eighth-order method is established in \cite{Wang}. This eighth-order iterative method is expressed as :

%\begin{equation}\label{eqn:335}
%y_n=x_n-\frac{f(x_n)}{f'(x_n)},
%\end{equation}
\begin{equation*}\label{eqn:336}
z_n=y_n-\frac{f(y_n)}{2f[x_n,y_n]-f'(x_n)},
\end{equation*}
\begin{equation}\label{eqn:337}
x_{n+1}=z_n-\frac{f(z_n)}{2f[x_n,z_n]+f[y_n,z_n]-2f[x_n,y_n]+(y_n-z_n)f[y_n,x_n,x_n]}.
\end{equation}

\subsection{\bf  Sragolzaei et al. Method, [ \cite{Sargolzaei}, 2011]:}
%The following eighth-order method is established in \cite{Sargolzaei}. This eighth-order iterative method is expressed as
%\begin{equation}\label{eqn:319}
%y_n=x_n-\frac{f(x_n)}{f'(x_n)},
%\end{equation}
\begin{equation*}\label{eqn:320}
z_n=y_n-\left(1+\frac{f(y_n)}{f(x_n)}\right)^2\left(\frac{f(y_n)}{f'(x_n)}\right),
\end{equation*}
\begin{equation}\label{eqn:321}
x_{n+1}=z_n-\frac{f(z_n)}{2f[z_n,x_n]+f[z_n,y_n]-2f[y_n,x_n]+(y_n-z_n)f[y_n,x_n,x_n]}.
\end{equation}

\subsection{Cordero et al. Method, [ \cite{Cordero2}, 2011]}
% The following eighth-order method is established in \cite{Cordero2}. This eighth-order iterative method is expressed as :
%\begin{equation}\label{eqn:332a}
%y_n=x_n-\frac{f(x_n)}{f'(x_n)},
%\end{equation}
\begin{equation*}\label{eqn:332}
z_n=x_n-\frac{f(x_n)}{f'(x_n)}\left[\frac{f(x_n)-f(y_n)}{f(x_n)-2f(y_n)}\right],
\end{equation*}
\begin{equation*}\label{eqn:333}
u_n=z_n-\frac{f(z_n)}{f'(x_n)}\left[\frac{f(x_n)-f(y_n)}{f(x_n)-2f(y_n)}+\frac{1}{2}\frac{f(z_n)}{f(y_n)-2f(z_n)}\right]^2,
\end{equation*}
\begin{equation}\label{eqn:334}
x_{n+1}=u_n-3\frac{f(z_n)}{f'(x_n)}\left[\frac{u_n-z_n}{y_n-x_n}\right].
\end{equation}

\subsection{ Soleymani Methods, [(\cite{Soleymani2}, \cite{Soleymani1}); (2011, 2012)]:\\} 
%The following eighth-order methods are established in \cite{Soleymani1} and \cite{Soleymani2} respectively. These eighth-order iterative method are expressed as :

\textit{Method I}
\begin{equation*}\label{eqn:330}
z_n=x_n-\left[1-\frac{3}{8}\frac{f'(y_n)^2-f'(x_n)^2}{f'(y_n)^2}\right]\frac{f(x_n)}{f'(x_n)},
\end{equation*} 
\begin{equation}\label{eqn:331}
x_{n+1}=z_n-\frac{f(z_n)}{f'(y_n)+2f[z_n,x_n,x_n](z_n-y_n)}.
\end{equation}
%\begin{equation}\label{eqn:325}
%y_n=x_n-\frac{f(x_n)}{f'(x_n)},
%\end{equation}
\textit{Method II}
\begin{equation*}\label{eqn:326}
z_n=x_n-\frac{2f(x_n)}{f'(x_n)+f'(y_n)},
\end{equation*}  
\begin{equation*}\label{eqn:327}
k_n=z_n-\frac{f(z_n)}{f'(y_n)},
\end{equation*} 
\begin{equation}\label{eqn:328}
x_{n+1}=k_n-\frac{f(k_n)}{\frac{f'(x_n)(3f'(y_n)-f'(x_n))}{f'(x_n)+f'(y_n)}}.\\
\end{equation} 
%\subsection{\bf F. Soleymani} The following eighth-order method is established in \cite{Soleymani2}. This eighth-order iterative method is expressed as :
%\begin{equation}\label{eqn:329}
%y_n=x_n-\frac{2}{3}\frac{f(x_n)}{f'(x_n)},
%\end{equation}
 
\subsection{\bf Kim Method, [ \cite{Kim}, 2012]:}
%The following eighth-order method is established in \cite{Kim}. This eighth-order iterative method is expressed as
%\begin{equation}\label{eqn:322}
%y_n=x_n-\frac{f(x_n)}{f'(x_n)},
%\end{equation}
\begin{equation*}\label{eqn:323}
z_n=y_n-\left(\frac{1+\beta u_n+\lambda u_n^2}{1+(\beta-2)u_n+\mu u_n^2}\right)\frac{f(y_n)}{f'(x_n)},
\end{equation*}
\begin{equation}\label{eqn:324}
x_{n+1}=z_n-\left(\frac{1+a u_n+b v_n}{1+c u_n+d v_n}\right)\frac{f(z_n)}{f'(x_n)+f[y_n, x_n, z_n](z_n-x_n)},
\end{equation}
where $u_n=f(y_n)/f(x_n)$ , $v_n=f(z_n)/f(x_n)$ , $\beta,\  \lambda,\  \mu,\  a,\  b,\  c, $ and $d$ are constant parameters and related by $\beta=(\lambda-\mu-2/3)/2$, $a=-2$, $c=-3$, $d=b-3$.

%\subsection{ Thukral Method, \cite{Thukral}:}
%The following eighth-order method is established in \cite{Thukral}. This eighth-order iterative method is expressed as :

%\begin{equation}\label{eqn:338}
%y_n=x_n-\frac{f(x_n)}{f'(x_n)},
%\end{equation}
%\begin{equation*}\label{eqn:339}
%z_n=x_n-\frac{f(x_n)^2+f(y_n)^2}{f'(x_n)(f(x_n)-f(y_n))},
%\end{equation*} 
%\begin{equation}\label{eqn:340}
%x_{n+1}=z_n-\left[ \left(\frac{1+\mu_n^2}{1-\mu_n^2}\right)-2(\mu_n)^2-6(\mu_n)^3+\frac{f(z_n)}{f(y_n)}+4\frac{f(z_n)}{f(x_n)}\right]\frac{f(z_n)}{f'(x_n)}
%\end{equation} 
%where $\mu_n=\frac{f(y_n)}{f(x_n)}$ .

\section{ Numerical Testing}
To demonstrate the performance of the new eighth-order method, we take seven particular non-linear equations. We will determine the consistency and stability of results by examining the convergence of the new iterative method. We will give estimates of the approximate solution produced be the eighth-order method.

Here we consider, the following test functions to illustrate the accuracy of new iterative method. The root of each nonlinear test function is also listed in from of each page up to fifteen decimal places, when such roots are non-integers. All the computations reported here we have done using Mathematica 8, 
%where for convergence we have selected the distance of two consecutive approximations to be less then $\epsilon=1.e-6000$. That is, or $|f(x_n)|<10^{-6000}$. 
Scientific computations in many branches of science and technology demand very high precision degree of numerical precision. The test non-linear functions are listed in Table-1.
 %\newpage                            %Table I
\begin{table}[htb]
 \caption{ Test functions and their roots.}
  \begin{tabular}{ll} \hline
Non-linear function & \hspace{50pt}Roots \\ \hline 
$f_1(x)=\sin(x)-\frac{x}{100}$ & \hspace{50pt}0 \\ 
$f_2(x)=\frac{1}{3x^4}-x^3-\frac{1}{3x}+1$ &\hspace{50pt}1 \\ 
$f_3(x)=\exp(\sin(x))-1-\frac{x}{5}$ & \hspace{50pt}0 \\ 
$f_4(x)=x+\sin(\frac{x^2}{\pi})$ & \hspace{50pt}0\\ 
$f_5(x)=\sqrt{x^4+8}\sin(\frac{\pi}{x^2+2})+\frac{x^3}{x^4+1}-\sqrt{6}+\frac{8}{16}$ & \hspace{50pt}-2\\ 
%$f_6(x)=\arcsin(x^2-1)-\frac{x}{2}+1$ & \hspace{50pt}0.594810968398369 \\ 
$f_6(x)=\cos(x)-x$ & \hspace{50pt}0.739085133215160\\ 
$f_7(x)=\exp(x)+\cos(x)$ & \hspace{50pt}-1.7461395304080124\\ \hline
  \end{tabular}
  \label{tab:abbr}
\end{table}
The results of comparison for the test function are provided in the Table 2-8. It can be seen that the resulting method from our class are accurate and efficient in terms of number of accurate decimal places to find the roots after some iterations. 
%In terms of computational cost our cost is much better than the compared methods. The cost includes four evaluations of the function per full iteration to reach the efficiency index 1.682.

%An important problem that appears in practical application of multi-point methods is the a quick convergence on of the merits of multipoint methods, can be obtained only if initial approximations are sufficiently close to the sought roots; otherwise it is not possible to realize the expected convergence speed in practice. 
%For this reason, in applying multipoint root-finding methods, a special attention should be paid to find good initial approximation. Yun in \cite{Yun} outlined a non-iterative way of this purpose as comes next:
%\begin{equation}
%x_0 \approx \frac{1}{2}\left[a+b+sign(f(a))\times\int_{a}^{b}\tanh(\delta.f(x))dx\right]
%\end{equation} 
%where $x_0$ is a simple root (an approximation of it) of $f(x)=0$ on the interval $[a,b]$ with $f(a)f(b)<0$, and $\delta$
%is a positive number in the set of natural number.
\newpage
                       %Table II
\begin{table}[htb]
 \caption{Errors Occurring in the estimates of the root of function $f_1$ by the method described with initial guess $x_0=0.7$.}
  \begin{tabular}{llll} \hline
Methods & $\left|f_1(x_1)\right|$&$\left|f_1(x_2)\right|$ & $\left|f_1(x_3)\right|$ \\\hline
$(\ref{eqn:218a})$ & 0.695e-5& 0.654e-60&0.336e-665 \\ 
$(\ref{eqn:33})$ with $\gamma=1$ &0.791e-3&0.363e-18&0.740e-95 \\ 
$(\ref{eqn:35})$ with $\gamma=1$ &0.205e-3&0.428e-21&0.169e-109\\ 
$(\ref{eqn:37})$ with $\gamma=1$ &0.628e-3&0.115e-18&0.233e-97 \\ 
$(\ref{eqn:39})$ with $\gamma=1$ &0.887e-2&0.177e-8&0.140e-28 \\
$(\ref{eqn:311})$ with $\gamma=1$ &0.689e-4&0.339e-48&0.139e-535 \\
$(\ref{eqn:313})$ with $\gamma=1$ &0.821e-4&0.233e-47&0.224e-526 \\
$(\ref{eqn:315})$ with $\gamma=1$ &0.754e-4&0.918e-48&0.790e-531 \\
$(\ref{eqn:318})$ &0.129e-2&0.174e-28&0.257e-261 \\    
$(\ref{eqn:321})$ &0.275e-4&0.239e-53&0.511e-593 \\  
$(\ref{eqn:324})$ with $(\lambda, \mu, b)=(0,0,4)$ &0.101e-3&0.414e-46&0.201e-512 \\ 
$(\ref{eqn:328})$ &0.148e-4&0.116e-65&0.512e-860 \\
$(\ref{eqn:331})$ &0.133e-2&0.601e-10&0.548e-32 \\ 
$(\ref{eqn:334})$ &0.830e-4&0.409e-39&0.702e-357 \\
$(\ref{eqn:337})$ &0.101e-4&0.414e-58&0.217e-645 \\   \hline
  \end{tabular}
  \label{tab:abbr}
\end{table}

\newpage                         %Table III
\begin{table}[htb]
 \caption{Errors Occurring in the estimates of the root of function $f_2$ by the method described with initial guess $x_0=1.2$.}
  \begin{tabular}{llll} \hline
Methods & $\left|f_2(x_1)\right|$&$\left|f_2(x_2)\right|$ & $\left|f_2(x_3)\right|$  \\\hline
$(\ref{eqn:218a})$ & 0.111e-3& 0.497e-31&0.823e-250 \\ 
$(\ref{eqn:33})$ with $\gamma=1$ &0.372e-3&0.709e-24&0.638e-161 \\ 
$(\ref{eqn:35})$ with $\gamma=1$ &dgt&dgt&dgt \\ 
$(\ref{eqn:37})$ with $\gamma=1$ &0.165e+0&.136e-3&0.442e-15 \\
$(\ref{eqn:39})$ with $\gamma=1$ &0.768e-2&0.116e-6&0.427e-21 \\
$(\ref{eqn:311})$ with $\gamma=1$ &0.317e-2&0.192e-18&0.376e-148 \\
$(\ref{eqn:313})$ with $\gamma=1$ &0.547e-2&0.141e-16&0.308e-133 \\
$(\ref{eqn:315})$ with $\gamma=1$ &0.420e-2&0.176e-17&0.186e-140 \\
$(\ref{eqn:318})$ &0.213e+1&0.536e-1&0.271e-7 \\   
$(\ref{eqn:321})$ &0.109e-1&0.431e-14&0.362e-113 \\    
$(\ref{eqn:324})$ with $(\lambda, \mu, b)=(0,0,4)$ &0.496e-4&0.101e-33&0.318e-271 \\ 
$(\ref{eqn:328})$ &0.667e-3&0.119e-24&0.128e-198 \\
$(\ref{eqn:331})$ &0.545e-4&0.419e-9&0.247e-19 \\ 
$(\ref{eqn:334})$ &0.992e-2&0.683e-15&0.467e-120 \\
$(\ref{eqn:337})$ &0.713e-3&0.409e-24&0.494e-194 \\ \hline 
  \end{tabular}
  \label{tab:abbr}
\end{table}

\newpage
                         %Table IV
\begin{table}[htb]
 \caption{Errors Occurring in the estimates of the root of function $f_3$ by the method described with initial guess $x_0=-0.55$.}
  \begin{tabular}{llll} \hline
Methods & $\left|f_3(x_1)\right|$&$\left|f_3(x_2)\right|$ & $\left|f_3(x_3)\right|$  \\\hline
$(\ref{eqn:218a})$ & 0.628e-2& 0.344e-20&0.168e-184 \\ 
$(\ref{eqn:33})$ with $\gamma=1$ &0.175e-1&0.682e-9&0.549e-146 \\ 
$(\ref{eqn:35})$ with $\gamma=1$ &0.266e+1&0.260e+2&0.527e+3 \\ 
$(\ref{eqn:37})$ with $\gamma=1$ &0.176e+0&0.736e-4&0.871e-17 \\ 
$(\ref{eqn:39})$ with $\gamma=1$ &0.486e-2&0.243e-5&0.614e-12 \\
$(\ref{eqn:311})$ with $\gamma=1$ &0.520e-2&.212e-18&0.173e-149 \\
$(\ref{eqn:313})$ with $\gamma=1$ &0.883e-2&0.141e-16&0.652e-135 \\
$(\ref{eqn:315})$ with $\gamma=1$ &0.684e-2&0.186e-17&0.610e-142 \\
$(\ref{eqn:318})$ &0.271e+1&0.626e+3&0.225e+5 \\
$(\ref{eqn:321})$ &0.107e-1&0.344e-15&0.582e-123 \\
$(\ref{eqn:324})$ with $(\lambda, \mu, b)=(0,0,4)$ &0.623e-2&0.337e-18&0.261e-148 \\  
$(\ref{eqn:328})$ &0.737e-2&0.568e-17&0.774e-138 \\ 
$(\ref{eqn:331})$ &0.424e-2&0.198e-5&0.431e-12 \\ 
$(\ref{eqn:334})$ &0.176e-1&0.284e-14&0.162e-116 \\  
$(\ref{eqn:337})$ &0.110e-2&0.141e-24&0.104e-199 \\  \hline
  \end{tabular}
  \label{tab:abbr}
\end{table}

\newpage                         %Table V
\begin{table}[htb]
 \caption{Errors Occurring in the estimates of the root of function $f_4$ by the method described with initial guess $x_0=0.1$.}
  \begin{tabular}{llll} \hline
Methods & $\left|f_4(x_1)\right|$&$\left|f_4(x_2)\right|$ & $\left|f_4(x_3)\right|$  \\\hline
$(\ref{eqn:218a})$ & 0.467e-14& 0.371e-147&0.370e-1478 \\ 
$(\ref{eqn:33})$ with $\gamma=1$ &0.340e-12&0.287e-115&0.629e-1043 \\ 
$(\ref{eqn:35})$ with $\gamma=1$ &0.199e-10&0.174e-97&0.513e-881 \\ 
$(\ref{eqn:37})$ with $\gamma=1$ &0.124e-11&0.128e-109&0.173e-991 \\ 
$(\ref{eqn:39})$ with $\gamma=1$ &0.274e-4&0.568e-15&0.508e-47 \\
$(\ref{eqn:311})$ with $\gamma=1$ &0.756e-11&0.106e-91&0.156e-738\\
$(\ref{eqn:313})$ with $\gamma=1$ &0.756e-11&0.106e-91&0.162e-738 \\
$(\ref{eqn:315})$ with $\gamma=1$ &0.756e-11&0.106e-91&0.159e-738 \\
$(\ref{eqn:318})$ &0.141e-9&0.322e-80&0.242e-645 \\   
$(\ref{eqn:321})$ &0.354e-10&0.130e-85&0.435e-689 \\  
$(\ref{eqn:324})$ with $(\lambda, \mu, b)=(0,0,4)$ &0.169e-11&0.145e-97&0.442e-786 \\ 
$(\ref{eqn:328})$ &0.101e-10&0.146e-90&0.283e-729 \\
$(\ref{eqn:331})$ &0.472e-3&0.999e-8&0.447e-17 \\ 
$(\ref{eqn:334})$ &0.681e-11&0.423e-92&0.950e-742 \\
$(\ref{eqn:337})$ &0.256e-11&0.618e-96&0.706e-773 \\ \hline
  \end{tabular}
  \label{tab:abbr}
\end{table}

\newpage
                           %Table VI
\begin{table}[htb]
 \caption{Errors Occurring in the estimates of the root of function $f_5$ by the method described with initial guess $x_0=-3$.}
  \begin{tabular}{llll} \hline
Methods & $\left|f_5(x_1)\right|$&$\left|f_5(x_2)\right|$ & $\left|f_5(x_3)\right|$  \\\hline
$(\ref{eqn:218a})$ & 0.283e-3& 0.555e-26&0.108e-207 \\ 
$(\ref{eqn:33})$ with $\gamma=1$ &0.756e-2&0.218e-7&0.129e-29 \\ 
$(\ref{eqn:35})$ with $\gamma=1$ &0.690e-2&0.143e-7&0.243e-30 \\ 
$(\ref{eqn:37})$ with $\gamma=1$ &0.745e-2&0.201e-7&0.107e-29 \\ 
$(\ref{eqn:39})$ with $\gamma=1$ &0.179e-1&0.159e-3&0.101e-7 \\
$(\ref{eqn:311})$ with $\gamma=1$ &0.333e-3&0.599e-25&0.659e-199 \\
$(\ref{eqn:313})$ with $\gamma=1$ &0.321e-3&0.447e-25&0.633e-200 \\
$(\ref{eqn:315})$ with $\gamma=1$ &0.328e-3&0.519e-25&0.209e-199 \\
$(\ref{eqn:318})$ &0.251e-3&0.409e-25&0.199e-199 \\   
$(\ref{eqn:321})$ &0.380e-3&0.157e-24&0.138e-195 \\ 
$(\ref{eqn:324})$ with $(\lambda, \mu, b)=(0,0,4)$ &0.432e-3&0.545e-25&0.344e-200 \\
$(\ref{eqn:328})$ &0.705e-3&0.362e-23&0.173e-185 \\
$(\ref{eqn:331})$ &0.622e-2&0.676e-5&0.779e-11 \\ 
$(\ref{eqn:334})$ &0.122e-2&0.507e-23&0.229e-185 \\
$(\ref{eqn:337})$ &0.307e-3&0.132e-25&0.160e-204 \\   \hline
  \end{tabular}
  \label{tab:abbr}
\end{table}

%\newpage                     %Table VII
%\begin{table}[htb]
% \caption{Errors Occurring in the estimates of the root of function $f_6$ by the method described with initial guess $x_0=0.9$.}
%  \begin{tabular}{llll} \hline
%Methods & $\left|f_6(x_1)\right|$&$\left|f_6(x_2)\right|$ & $\left|f_6(x_3)\right|$  \\\hline
%$(\ref{eqn:214})$ & 0.114e-6& 0.318e-50&0.169e-406 \\ 
%$(\ref{eqn:33})$ with $\gamma=1$ &0.149e-3&0.309e-17&0.607e-72 \\ 
%$(\ref{eqn:35})$ with $\gamma=1$ &0.881e-4&0.401e-18&0.171e-75\\ 
%$(\ref{eqn:37})$ with $\gamma=1$ &0.138e-3&0.240e-17&0.222e-72 \\ 
%$(\ref{eqn:39})$ with $\gamma=1$ &0.379e-2&0.160e-6&0.289e-15 \\
%$(\ref{eqn:311})$ with $\gamma=1$ &0.132e-6&0.312e-58&0.294e-471 \\
%$(\ref{eqn:313})$ with $\gamma=1$ &0.124e-6&0.181e-58&0.380e-473 \\
%$(\ref{eqn:315})$ with $\gamma=1$ &0.128e-6&0.239e-58&0.348e-472 \\
%$(\ref{eqn:318})$ &0.243e-6&0.608e-56&0.943e-453 \\ 
%$(\ref{eqn:321})$ &0.776e-6&0.410e-52&0.251e-422 \\  
%$(\ref{eqn:324})$ with $(\lambda, \mu, b)=(0,0,4)$ &0.112e-5&0.340e-50&0.238e-406 \\
%$(\ref{eqn:328})$ &0.731e-6&0.140e-51&0.254e-417 \\
%$(\ref{eqn:331})$ &0.539e-2&0.103e-6&0.378e-13 \\ 
%$(\ref{eqn:334})$ &0.717e-6&0.132e-51&0.184e-417 \\
%$(\ref{eqn:337})$ &0.761e-6&0.111e-51&0.225e-418 \\  \hline   
%  \end{tabular}
%  \label{tab:abbr}
%\end{table}

\newpage
                             %Table VIII
\begin{table}[htb]
 \caption{Errors Occurring in the estimates of the root of function $f_6$ by the method described with initial guess $x_0=1.5$.}
  \begin{tabular}{llll} \hline
Methods & $\left|f_6(x_1)\right|$&$\left|f_6(x_2)\right|$ & $\left|f_6(x_3)\right|$ \\\hline
$(\ref{eqn:218a})$ & 0.696e-6& 0.176e-55&0.300e-452 \\ 
$(\ref{eqn:33})$ with $\gamma=1$ &0.682e-2&0.124e-10&0.132e-45 \\ 
$(\ref{eqn:35})$ with $\gamma=1$ &0.706e-2&0.141e-10&0.227e-45 \\ 
$(\ref{eqn:37})$ with $\gamma=1$ &0.686e-2&0.126e-10&0.143e-45 \\ 
$(\ref{eqn:39})$ with $\gamma=1$ &0.650e-1&0.383e-3&0.129e-7 \\
$(\ref{eqn:311})$ with $\gamma=1$ &0.415e-5&0.990e-48&0.103e-388 \\
$(\ref{eqn:313})$ with $\gamma=1$ &0.421e-5&0.111e-47&0.261e-388 \\
$(\ref{eqn:315})$ with $\gamma=1$ &0.418e-5&0.109e-47&0.164e-388 \\
$(\ref{eqn:318})$ &0.249e-4&0.203e-40&0.385e-329 \\    
$(\ref{eqn:321})$ &0.142e-5&0.222e-51&0.797e-418 \\
$(\ref{eqn:324})$ with $(\lambda, \mu, b)=(0,0,4)$ &0.228e-5&0.317e-50&0.454e-409 \\  
$(\ref{eqn:328})$ &0.234e-5&0.185e-50&0.289e-411 \\ 
$(\ref{eqn:331})$ &0.129e-1&0.311e-5&0.180e-12 \\ 
$(\ref{eqn:334})$ &0.893e-5&0.142e-45&0.589e-372 \\
$(\ref{eqn:337})$ &0.870e-6&0.363e-54&0.332e-441 \\   \hline
  \end{tabular}
  \label{tab:abbr}
\end{table}

\newpage
                              %Table IX
\begin{table}[htb]
 \caption{Errors Occurring in the estimates of the root of function $f_7$ by the method described with initial guess $x_0=-2.3$.}
  \begin{tabular}{llll} \hline
Methods & $\left|f_7(x_1)\right|$&$\left|f_7(x_2)\right|$ & $\left|f_7(x_3)\right|$ \\\hline
$(\ref{eqn:218a})$ & 0.563e-6& 0.167e-54&0.101e-442 \\ 
$(\ref{eqn:33})$ with $\gamma=1$ &0.105e-2&0.464e-14&0.179e-59 \\ 
$(\ref{eqn:35})$ with $\gamma=1$ &0.529e-2&0.301e-11&0.318e-48 \\ 
$(\ref{eqn:37})$ with $\gamma=1$ &0.490e-3&0.222e-15&0.946e-65 \\ 
$(\ref{eqn:39})$ with $\gamma=1$ &0.578e-2&0.554e-6&0.507e-14 \\
$(\ref{eqn:311})$ with $\gamma=1$ &0.412e-4&0.749e-39&0.893e-317 \\
$(\ref{eqn:313})$ with $\gamma=1$ &0.443e-4&0.135e-38&0.977e-315 \\
$(\ref{eqn:315})$ with $\gamma=1$ &0.428e-4&0.101e-38&0.960e-316 \\
$(\ref{eqn:318})$ &0.718e-3&0.478e-28&0.186e-229 \\ 
$(\ref{eqn:321})$ &0.788e-4&0.774e-37&0.671e-301 \\    
$(\ref{eqn:324})$ with $(\lambda, \mu, b)=(0,0,4)$ &0.231e-4&0.551e-42&0.580e-343 \\ 
$(\ref{eqn:328})$ &0.788e-6&0.357e-53&0.647e-432 \\ 
$(\ref{eqn:331})$ &0.384e-3&0.269e-8&0.132e-18 \\
$(\ref{eqn:334})$ &0.246e-4&0.910e-41&0.314e-332 \\
$(\ref{eqn:337})$ &0.592e-5&0.357e-46&0.630e-376 \\   \hline
  \end{tabular}
  \label{tab:abbr}
\end{table}
%\\\\\\\\\\\\\\\\\\\\\\\\\\\\\\\\\\\\\\\\\\\\\\\\\\\\\\\\\\\\\\\\\\\\\\\\\\\\\\\\\\\\\\\\\\\\\\\\\\\\\\\\\\\\\\\\\\\\\\\\\\\\\\\\\\\\\\\\\\\\\\\\\\\\\\\\\\\\\\\\\\\\\\\\\\\\\\\\\\\\\\\\\\\\\\
\newpage
\section{Conclusion}
In this work, we have developed a new eighth-order convergent method for solving non-linear equations. Convergence analysis shows that our new method is eighth-order convergent which is also supported by the numerical works. This iterative method require evaluation of three functions and one first derivative during each iterative step. Computational results demonstrate that the iterative method is efficient and exhibit better performance as compared with other well known eighth-order methods.
%\\\\\\\\\\\\\\\\\\\\\\\\\\\\\\\\\\\\\\\\\\\\\\\\\\\\

 {Jai Prakash Jaiswal\\
Department of Mathematics,\\
 Maulana Azad National Institute of Technology,\\ 
 Bhopal, M.P., India-462051}.\\
{Corresponding author:\\
 E-mail: {\sf jpbhu2007@gmail.com}.\\\\
 \textsc{Neha Choubey\\
Department of Applied Sciences and Humanities,\\
Oriental Institute of Science and Technology,\\ 
 Bhopal, M.P., India-462021}\\
 E-mail: {nehachby2@gmail.com}.
 % Phone: +91\,9691530943, Fax: +00\,999\,999\,999}\\\\
%{Sunil Panday\\
%Department of Mathematics,\\
% Maulana Azad National Institute of Technology,\\ 
% Bhopal, M.P., India-462051}.\\
%%{Second author:\\
% E-mail: {\sf sunilpanday@hotmail.co.in}.\\
% %Phone: +91\,7828053984,}


\begin{thebibliography}{10}


\bibitem{Cordero}
A. Cordero, S. L. Huero, E. Martinez and J. R. Torregrosa: Steffensen type methods for solving non-linear equations, Applied Mathematics and computation, vol 198, no. 2, pp. 527-533 (2007).
\bibitem{Cordero1}
A. Cordero, J. L. Huero, M. Martinez and J. R. Torregrosa: Efficient three step iterative methods with sixth order convergence for non linear equations, Numerical Algorithms 53, pp. 485-495 (2010).
\bibitem{Cordero2}
A. Cordero, J. R. Torregrosa and M. P. Vassileva: Three step iterative method with optimal eighth order convergence, Journal of Computational and Applied Mathematics, 235, pp. 3189-3194, (2011).
\bibitem{Iliev}
A. Iliev and N. Kyurtrchiev, Nontrivial method in Numerical Analysis selected topics in Numerical Analysis, LAP Lambert Academic Publishing (2010).
\bibitem{Yun}
B. I. Yun: A non-iterative method for solving non-linear equations, Applied Mathematics and Computation, vol. 198, no.2, pp. 691-699 (2008).
\bibitem{Chun}
C. Chun and Y. Ham: Some sixth-order variation of ostrowski's root finding methods, Applied Mathematical Computations, 193, pp. 389-394 (2007).
\bibitem{Soleymani2}
F. Soleymani: Regarding the accuracy of optimal eighth-order methods, Mathematical and Computer Modelling, 53, pp. 1351-1357  (2011).
\bibitem{Soleymani1}
F. Soleymani: A second derivative free eighth-order non-linear equation solver, Non-linear Stud. (19) No. 1, pp. 79-86 (2012).
\bibitem{Kung}
H. T. Kung and J. F. Traub: Optimal order of one-point and multipoint iteration, J.CAM 21, PP. 643-651 (1974).
\bibitem{Traub1} 
J. F. Traub: Iterative methods for solution of equations chelsea Publishing, New York, NY, USA (1997).
\bibitem{Traub}
J. F. Traub: Iterative methods for the solution of equation, Prentice-Hall, Englewood cliffs, New Jerset (1964).
\bibitem{Sharma}
J. R. Sharma and R. Sharma: A new family of modified Ostrowski's methods with accelerated eight order convergence, Numerical Algorithms, Vol. 54, no. 4, pp. 445-458 (2010).
\bibitem{Petkovic}
M. S. Petkovic: On a general clam of multipoint root-finding methods of high computational efficiency. SIAM J. Numerical Analysis, 47, pp. 4402-4414 (2010).
\bibitem{Sargolzaei}
P. Sargolzaei and F. Soleymani: Accurate fourteenth order methods for solving non-linear equations, Numerical Algorithm, vol. 58, pp. 513-527 (2011).
%\bibitem{Bi}
%W. Bi, H. Rend and Q. Wu, An new family eight-order iterative methods for for solving non linear equations Applied Mathematics and Computation, vol. 214 no. 10 pp. 236-245 (2009).
\bibitem{Thukral}
R. Thukral: A new eight-order iterative method for solving non-linear equations Applied Mathematics and Computation, vol. 217, no. 1, pp. 222-229 (2010).
\bibitem{Richard}
R. F. Kung: A family of fourth order methods for nonlinear equations, SIAM Journal on Numerical Analysis 10, pp.       876-879 (1973).
\bibitem{Khattvi}
S. K. Khattri and I. K. Argyrous, Sixth-order derivative free family of iterative methods, Applied mathematics and Computation, vol. 217, no. 12, pp. 5500-5507 (2011).
\bibitem{Weerakoon}
S. Weerakoon and T. G. I. Fernando: A variant of Newtons's Method with accelerated third-order convergence, method with accelerated third-order convergence, Applied Mathematics letters, vol. 13, no.8, pp. 87-93 (2000).
%\bibitem{Sharma1}
%J. R. Sharma and R. Sharma, A new family of modified ostrowski's methods with accelerated eighth order convergence, Numerical Algorithms, S4, pp. 445-456 (2010).  
\bibitem{Gautschi}
W. Gautschi: Numerical Analysis: An Introduction Birkhauser, Barton, Mass, USA (1997).
\bibitem{Bi1}
W. Bi, H. Ren and Q. Wen: A new family of eight-order iterative methods for solving non linear equation, Applied Mathematics and Computation, vol. 214, no. 1, pp. 234-245 (2009).
\bibitem{Wang}
X. Wang and L. Liu: Modified Ostrowski's method with eight-order convergence and high efficiency index, Applied Mathematics Letters, 23, pp. 549-554 (2010).
\bibitem{Kim} 
Y. I. Kim: A try parametric family of three step eight order methods for solving non linear equations, International Journal of Computer Mathematics, vol. 89, no. 8, pp. 1051-1059 (2012).
\\\\
\end{thebibliography}
\end{document}